\documentclass[11pt,dvips,twoside,letterpaper]{article}
\usepackage{pslatex}
\usepackage{fancyhdr}
\usepackage{graphicx}
\usepackage{geometry}
\usepackage{url}

\RequirePackage{amsfonts,amssymb,amsmath,amscd,amsthm}
\RequirePackage{txfonts}
\RequirePackage{graphicx}
\RequirePackage{xcolor}
\RequirePackage{geometry}
\RequirePackage{enumerate}

\def\figurename{Figure} 
\makeatletter
\renewcommand{\fnum@figure}[1]{\figurename~\thefigure.}
\makeatother

\def\tablename{Table} 
\makeatletter
\renewcommand{\fnum@table}[1]{\tablename~\thetable.}
\makeatother

\usepackage{amsmath}
\usepackage{amssymb}
\usepackage{amsfonts}
\usepackage{amsthm,amscd}

\newtheorem{theorem}{Theorem}[section]

\newtheorem{proposition}[theorem]{Proposition}
\theoremstyle{definition}
\newtheorem{definition}[theorem]{Definition}

\theoremstyle{remark}

\numberwithin{equation}{section}

\def\R{\mathbb R}

\setlength{\topmargin}{-0.35in}
\setlength{\textheight}{8.5in}   
\setlength{\textwidth}{5.5in}    
\setlength{\oddsidemargin}{0.5in}
\setlength{\evensidemargin}{0.5in} \setlength{\headheight}{26pt}
\setlength{\headsep}{8pt}

\begin{document}
\title{\bfseries\scshape{Point and Potential Symmetries of the Fokker Planck Equation}}
\author{\bfseries\scshape Faya Doumbo Kamano\thanks{E-mail address : doumbokamano@yahoo.fr} \\
Institut Sup\'erieur de Formation \`a Distance (ISFAD),\\
Conakry, {\bf Guin\'ee}. 
\\\bfseries\scshape Bakary Manga\thanks{E-mail address : bakary.manga@ucad.edu.sn, bakary.manga@imsp-uac.org} \\
D\'epartement de Math\'ematiques et Informatique, \\
Universit\'e Cheikh Anta Diop de Dakar,  {\bf S\'en\'egal}.  \\
and \\
Institut de Math\'ematiques et de Sciences Physiques (IMSP), \\
01 BP 613, Porto-Novo, {\bf B\'enin}.
\\\bfseries\scshape Jo\"el Tossa\thanks{E-mail address :  joel.tossa@imsp-uac.org} \\
Institut de Math\'ematiques et de Sciences Physiques (IMSP) \\
Universit\'e d'Abomey-Calavi, {\bf B\'enin}.
}

\date{}
\maketitle \thispagestyle{empty} \setcounter{page}{1}

\fancyfoot{}
\renewcommand{\headrulewidth}{0pt}

\begin{abstract}
We determine the Lie point symmetries of the Fokker-Planck equation and  provide 
examples of solutions of this equation. The Fokker-Planck equation admits a conserved form, 
hence there is an  auxiliary system associated to this equation and whose point symmetries give 
rise to potential  symmetries of the Fokker-Planck equation. We therefore use those potential symmetries to provide other solutions of the 
Fokker-Planck equation. 
\end{abstract}
\noindent {\bf AMS Subject Classification:} 22E70, 37L20, 53Z05.

\noindent 
\textbf{Keywords}: Fokker-Planck equation, Lie point symmetry, potential symmetry.


\section{Introduction}

\noindent 
The Fokker-Planck equation (FPE, for short) is a linear PDE that describes  
the transition probability density of a Markov process. It is also known as the Kolmogorov diffusion equation and 
is used to model many situations such as  evolution of the distribution 
function  of a particle, finance, turbulence, population dynamics, protein 
kinetics (see \cite{carrillo-cordier-mancini:decision-making-fp}, 
\cite{carrillo-cordier-mancini:one-dimensional-fp}, \cite{gardiner-handbook}, 
\cite{martin:processus-Stochastiques}, \cite{till:nonlinear-fp}).
The FPE interests many researchers as shown by the number of publications
on the subject; see e.g. \cite{brics-kaupzs-mahnke}, \cite{carrillo-cordier-mancini:decision-making-fp,
carrillo-cordier-mancini:one-dimensional-fp}, \cite{chancelier-cohen-pacard}, \cite{hesam-nazemi-haghbin}, \cite{hottovy},
\cite{ouhadan-kinani-rahmoune-awane}, \cite{ouhadan-kinani-rahmoune-awane-ammar-essabab}, \cite{tanski}, 
\cite{till:nonlinear-fp} and references therein.

We state the Fokker-Planck equation (FPE) in the following form:
\begin{eqnarray}{\label{e01}}
u_t(x,t)=-a_2u(x,t)-(a_2x+a_1)u_x(x,t)+\frac{1}{2}u_{xx}(x,t), 
\end{eqnarray}
where $a_1$ and $a_2$ are  real numbers; $u(x,t)$ is a function that depends on the variables 
$x$ and $t$, to be determined; and  $u_\alpha$ denotes differentiation of $u$ with respect 
to the variable $\alpha$. Like most PDEs, it gives explicit solutions only in very specific cases 
related both to the form of the equation and the shape of the area where it is studied.
Many techniques are used to solve particular cases of the FPE: 
quantum mechanics technique (\cite{brics-kaupzs-mahnke}), Fourier transform method (\cite{tanski}),
differential transform method (\cite{hesam-nazemi-haghbin}), numerical method (e.g. 
\cite{carrillo-cordier-mancini:decision-making-fp, carrillo-cordier-mancini:one-dimensional-fp}, 
 \cite{hottovy}, \cite{zorzano-mais-vazquez}).

Powerful means  used in the study of DEs and PDEs are the Lie symmetries. 
Since their introduction by Sophus Lie (\cite{lie-theor-transf}), 
Lie symmetries are experiencing a rapid development  as a wonderful tool 
for the classification  of invariant solutions of DEs and PDEs.
Point symmetries are local symmetries as their infinitesimals depend on independent variables $x$'s, dependent variables $u(x)$'s, 
and derivatives of dependent variables; and are determined if $u(x)$ is sufficiently smooth in some neighbourhood of $x$.
Potential symmetries when with them are non-local symmetries whose infinitesimals, at any point $x$, depend on the global behavior of 
$u(x)$. Potential symmetries are  very useful as they lead to the construction of solutions of  a given system of PDEs which cannot be
obtained as invariant solutions of its local symmetries. See  Section \ref{sec:potential-symmetries} for wider discussion on potential symmetries.
See also {\it Chap 7} of \cite{bluman-kumei:symmetries-de} for more about potential symmetries. 

The FPE is considered in \cite{ouhadan-kinani-rahmoune-awane} and in \cite{pucci-saccomandi} in the form $u_t=u+xu_x+u_{xx}$ 
which is different from (\ref{e01}).  
The authors of the papers quoted above have determined the Lie point symmetries of the FPE, as well as the potential symmetries.  
They also have provided families of solutions of the FPE.
%

In this paper we consider the FPE (\ref{e01}) with the condition  $a_2\neq 0$.  
We adopt the same approach as in \cite{ouhadan-kinani-rahmoune-awane} and determine 
the Lie point symmetries of the FPE in Section \ref{sec:point-symmetries}. 
Some of its solutions are also determined. In Section \ref{sec:potential-symmetries}, 
we show that the FPE can be written in a conserved form. A conserved form leads to auxiliary dependent variables (which are potentials) and 
then to an auxiliary system of PDEs whose local symmetries are the potential symmetries of the FPE. We determine such symmetries in 
Section \ref{sec:potential-symmetries} and use them to construct other solutions of the Fokker-Planck equation.

\section{Point symmetries of the Fokker-Planck equation \label{sec:point-symmetries}}
\subsection{Some basics about Lie point symmetries}

Consider a general system of $n^{th}$ order DEs admitting $p$ independent variables $x = (x_1,\ldots,x_p)$ in 
$X\simeq \mathbb{R}^p$ and $q$ dependent variables $u = (u_1,\dots,u_q)$ in $U\simeq \mathbb R^q$,
\begin{eqnarray}\label{sde}
\Delta_{\nu} (x, u^{(n)}) = 0, \quad \nu = 1,\ldots,m,
\end{eqnarray}
with $u^{(n)}$ denoting the derivatives of the $u$'s with respect to the $x$'s up to order $n$. The
system (\ref{sde}) is thus defined by the vanishing of a collection of differentiable functions 
$\Delta_\nu: J^n \to \mathbb{R}$ defined on the $n^{th}$ jet space $J^{n}=J^{n}E=X\times U^{(n)}$,
where $E$ is the {\it total space} $E=X\times U$ (see \cite{olver-eq-inv-sym}). The points in the
vertical space  $U^{(n)}$ are denoted by $u^{(n)}$ and consist of all the dependent variables
and their derivatives up to order $n$. The system (\ref{sde}) can therefore be viewed as defining (or defined by) a variety
$
S_\Delta =\{(x,u^{(n)})/ \Delta_\nu (x,u^{(n)}) = 0, \, \nu = 1,\ldots,m\},
$ 
contained in the $n^{th}$ order jet space, and consisting of all points $(x,u^{(n)})\in J^n$ satisfying the
system. The defining functions $\Delta_{\nu}$ are assumed to be regular in a neighbourhood of $S_{\Delta}$; in particular, 
this is the case if the Jacobian matrix of the functions  $\Delta_{\nu}$ with respect to the jet variables $(x, u^{(n)})$ has maximal 
rank $m$ everywhere on $S_\Delta$. In the case of point transformations, the infinitesimal generators
form a Lie algebra $\mathcal{G}$ consisting of vector fields
$
 \displaystyle V=\sum^p_{i=1}\xi^i(x,u)\frac{\partial}{\partial x^i}+\sum^q_{\alpha=1}\eta^\alpha(x,u)\frac{\partial}{\partial u^\alpha} 
$ 
on the space of independent and dependent variables. Let $V^{(n)}$ denote the $n^{th}$ prolongation  
of  $V$ to the jet space $J^n$ (\cite[p.117]{olver-eq-inv-sym}): 
\vspace{-0.2cm}
\begin{equation}
 V^{(n)}=\sum^p_{i=1}\xi^i(x,u)\frac{\partial}{\partial x^i}+
\sum^q_{\alpha=1}\sum^n_{\# J=j=0}\eta_J^\alpha(x,u^{(j)})\frac{\partial}{\partial u^\alpha_J}, 
\end{equation}
for any unordered multi-index $J=(j_1,\dots,j_k), 1\leqslant j_k\leqslant p$ of order $k=\# J=j_1+\cdots+j_k\leqslant n$; where, 
for any $\alpha=1,\dots,q$, 
\begin{equation}
\eta_J^\alpha=D_JQ^\alpha+\sum_{i=1}^p\xi^iu_{J,i}^\alpha  \text{ and } 
Q^\alpha=\eta^\alpha(x,u)-\sum_{i=1}^p\xi^i(x,u)\frac{\partial u^\alpha}{\partial x^i}.
\end{equation}

The fundamental infinitesimal symmetry criterion for the system (\ref{sde}) is stated in the
\begin{theorem}[\cite{olver-eq-inv-sym}]\label{ict}
 A connected group of transformations $G$ is a symmetry group of the
fully regular system of DEs (\ref{sde}) if and only if the infinitesimal symmetry conditions
\begin{eqnarray}{\label{ic}}
V^{(n)} (\Delta_\nu ) = 0, \; \nu = 1,\dots, m, \text{ whenever } \Delta = 0,
\end{eqnarray}
hold for every infinitesimal generator $V$ of the Lie algebra $\mathcal{G}$ of $G$.
\end{theorem}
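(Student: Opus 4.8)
The plan is to reduce the statement to the standard infinitesimal invariance criterion for a regular variety under a connected group action, applied not to $G$ itself but to its prolongation $G^{(n)}$ acting on the jet space $J^n$. The first step is to reformulate what it means for $G$ to be a symmetry group: by definition $G$ maps solutions of (\ref{sde}) to solutions, which is equivalent to saying that the prolonged group action $G^{(n)}$ leaves the solution variety $S_\Delta \subset J^n$ invariant. Indeed, the $n$-jet of a solution $u = f(x)$ is a point of $S_\Delta$, and a group element $g \in G$ sends the graph of $f$ to the graph of a transformed function $\tilde f$; the induced map on $n$-jets is precisely the prolonged transformation $g^{(n)} = \mathrm{pr}^{(n)} g$, so $g$ preserves solutions iff $g^{(n)}$ preserves $S_\Delta$. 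I would also record the basic fact that the infinitesimal generator of the prolonged flow $\exp(\varepsilon V)^{(n)}$ is exactly $V^{(n)}$; this is the content of the prolongation formula for the coefficients $\eta^\alpha_J$ quoted above, so that the flow of $V^{(n)}$ on $J^n$ coincides with the prolongation of the flow of $V$ on $E = X \times U$.

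With this reduction, the theorem becomes: $G^{(n)}$ leaves $S_\Delta$ invariant iff $V^{(n)}(\Delta_\nu) = 0$ on $S_\Delta$ for each $\nu$. For the forward implication, suppose $G^{(n)}$ leaves $S_\Delta$ invariant and fix a point $(x, u^{(n)}) \in S_\Delta$. Applying the prolonged one-parameter subgroup $\exp(\varepsilon V)^{(n)}$ keeps the point in $S_\Delta$, so $\Delta_\nu(\exp(\varepsilon V)^{(n)} \cdot (x, u^{(n)})) \equiv 0$ in $\varepsilon$; differentiating at $\varepsilon = 0$ and using that $V^{(n)}$ generates the prolonged flow yields $V^{(n)}(\Delta_\nu) = 0$ at that point. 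Since the point of $S_\Delta$ was arbitrary, the infinitesimal conditions (\ref{ic}) hold.

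The converse is where the regularity hypothesis does the work, and it is the step I expect to be the main obstacle. Assuming the infinitesimal conditions, the relations $V^{(n)}(\Delta_\nu) = 0$ on $S_\Delta$ assert that the vector field $V^{(n)}$ is tangent to $S_\Delta$ at each of its points. Here the maximal rank condition on the Jacobian of $(\Delta_1, \ldots, \Delta_m)$ is essential: it guarantees that $S_\Delta$ is a genuine embedded submanifold of $J^n$ of codimension $m$, so that tangency of $V^{(n)}$ is equivalent to the vanishing of the $V^{(n)}(\Delta_\nu)$ and that the flow of $V^{(n)}$ cannot leave $S_\Delta$. A tangent vector field integrates to a flow preserving the submanifold, so each one-parameter subgroup $\exp(\varepsilon V)^{(n)}$ maps $S_\Delta$ into itself. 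Finally, because $G$ is \emph{connected}, it is generated by a neighbourhood of the identity, hence by the one-parameter subgroups $\exp(\varepsilon V)$ as $V$ ranges over $\mathcal{G}$; therefore the invariance of $S_\Delta$ under all these subgroups propagates to invariance under all of $G^{(n)}$, and by the reduction above $G$ is a symmetry group of (\ref{sde}). The technical heart is thus the tangency-implies-invariance argument for a regular submanifold together with the connectedness reduction; the regularity assumption is exactly what rules out pathological behaviour of $S_\Delta$ and legitimizes passing from the pointwise infinitesimal condition to global invariance.
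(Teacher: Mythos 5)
The paper itself gives no proof of this theorem --- it is quoted verbatim from Olver's \emph{Equivalence, Invariants and Symmetry} --- so your attempt can only be measured against the standard textbook argument, which is indeed the one you outline. The converse half of your proof (infinitesimal criterion $\Rightarrow$ symmetry group) is essentially sound: maximal rank makes $S_\Delta$ an embedded and closed submanifold whose tangent space is the kernel of the Jacobian of $\Delta$, so $V^{(n)}(\Delta_\nu)=0$ on $S_\Delta$ expresses tangency of $V^{(n)}$; tangency gives invariance of $S_\Delta$ under the flows $\mathrm{pr}^{(n)}\exp(\varepsilon V)=\exp\big(\varepsilon V^{(n)}\big)$, connectedness of $G$ propagates this to the whole prolonged group, and invariance of $S_\Delta$ does imply that transformed solutions are again solutions.

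The gap is in your opening reduction, and it breaks the forward half. You assert that ``$G$ maps solutions to solutions'' is \emph{equivalent} to ``$\mathrm{pr}^{(n)}G$ leaves $S_\Delta$ invariant,'' but your justification proves only one direction: $n$-jets of solutions are carried to $n$-jets of solutions. To conclude that $\mathrm{pr}^{(n)}g$ preserves \emph{all} of $S_\Delta$ you need every point of $S_\Delta$ to be the $n$-jet of an actual solution, i.e.\ \emph{local solvability} of the system. That is precisely the second ingredient of Olver's ``fully regular'' (elsewhere ``nondegenerate'') hypothesis, which you have silently collapsed into the maximal rank condition alone and then spent entirely on the tangency step. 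Without local solvability the ``only if'' direction of the theorem is simply false: the system $u_x=u$, $u_y=1$ has a Jacobian of maximal rank everywhere on its nonempty variety $S_\Delta$, yet it has no solutions at all (the compatibility condition $u_{xy}=u_{yx}$ fails), so every transformation group --- for instance the one generated by $V=\partial/\partial u$ --- is vacuously a symmetry group, while $V^{(1)}(u_x-u)=-1\neq 0$ on $S_\Delta$, violating the criterion. So your proof of the implication ``symmetry group $\Rightarrow$ infinitesimal conditions'' does not go through as written; it must begin by invoking local solvability to produce, for each point of $S_\Delta$, a genuine solution whose jet passes through that point, and only then differentiate $\Delta_\nu$ along the prolonged flow of that solution's transforms.
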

Let $u=f(x)=f(x_1,\dots,x_p)$ be a function of $\mathbb{R}^p$ with values in $\mathbb{R}$. It is known that there exists 
$p_r=\binom{p+r-1}{p}$ derivatives of $f$ of order $r$. The equation $\Delta(x,u^{(n)})=0$  is defined 
on the space $\mathbb{R}^p\times U^{n}$ of dimension $p+qp^{(n)}$, with  
 $p^{(n)}=1+p_1+p_2+\cdots+p_n=\binom{p+n}{n}$. A system $\Delta(x,u^{(n)})=(\Delta_1(x,u^{(n)}),\dots,\Delta_m(x,u^{(n)}))$
 will have as Jacobian matrix, a matrix of rank $m\times (p+qp^{(n)})$. 
See more details in \cite[p. 95]{olver-ALGDE}.
\begin{definition}[\cite{olver-ALGDE}]
The system (\ref{sde}) 
is said to be of maximum rank
if the $m\times (p+qp^{(n)})$ Jacobian matrix $J_\Delta(x,u^{(n)}):=\left(\frac{\partial \Delta_\nu}{\partial x^i}, 
\frac{\partial \Delta_\nu}{\partial u_J^\alpha}\right)$ of $\Delta$ with respect to all the variables $(x,u^{(n)})$ is of
rank $m$ whenever $\Delta(x,u^{(n)})=0$.
\end{definition}

\subsection{Lie point symmetries of the FPE}

To investigate the Lie point symmetries of the FPE, we have to check the maximal rank condition for the map 
$\Delta: (x,t;u^{(2)})\mapsto u_t(x,t)+a_2u(x,t)+(a_2x+a_1)u_x(x,t)-\frac{1}{2}u_{xx}(x,t)$ whose kernel equation is  (\ref{e01})
on a subset $M^{(2)}$ of the $2^{nd}$ jet-space $X\times U^{(2)}$ of the manifold $X\times U$. 
The independent variables $(x,t)$ 
and the dependent variable $u$ leave on the spaces  $X\simeq \mathbb R^2$ and $U\simeq \mathbb R$, respectively. 
The expression $u^{(2)}=(u,u_x,u_t,u_{xx},u_{xt},u_{tt})$ 
represents the various partial derivatives up to the second order of $u$, and leaves on the second prolongation $U^{(2)}$ of the set $U$. 
The set  $M^{(2)}$ is the corresponding 
$2^{nd}$ prolongation of the subspace $M\subset X\times U$. The Jacobian matrix of $\Delta$,
 $J_\Delta(x,t;u^{(2)})=\left(a_2u_x,0,a_2,a_1+a_2x,1,-\frac{1}{2},0,0\right)$
does not vanish anywhere on $M^{(2)}$. Then, $\Delta$ is of maximal rank. Let 
 $V=\xi(x,t,u)\frac{\partial}{\partial x}+\tau(x,t,u)\frac{\partial}{\partial t}+\eta(x,t,u)\frac{\partial}{\partial u}$ 
be a vector field on $X\times M$,  where $\xi$, $\tau$ and $\eta$ are smooth functions.
 The second prolongation of $V$ reads 
\begin{eqnarray}{\label{e03}}
 V^{(2)}&=&V+\eta^x(x,t,u^{(2)})\frac{\partial}{\partial u_x}+\eta^t(x,t,u^{(2)})\frac{\partial}{\partial u_t}+
\eta^{xx}(x,t,u^{(2)})\frac{\partial}{\partial u_{xx}} \cr \cr
&&+\eta^{xt}(x,t,u^{(2)})\frac{\partial}{\partial u_{xt}}+
\eta^{tt}(x,t,u^{(2)})\frac{\partial}{\partial u_{tt}},
\end{eqnarray}
where $\eta^x,\eta^t,\eta^{xx},\eta^{xt}$ and $\eta^{tt}$ are given by the formulae (see \cite{olver-eq-inv-sym}):
\begin{eqnarray}
\eta^x&=&D_x(\eta-\xi u_x-\tau u_t)+\xi u_{xx}+\tau u_{xt}, \label{eq:case199}\\
\eta^t&=&D_t(\eta-\xi u_x-\tau u_t)+\xi u_{tx}+\tau u_{tt}, \label{eq:case200}\\
\eta^{xx}&=&D_{xx}(\eta-\xi u_x-\tau u_t)+\xi u_{xxx}+\tau u_{xxt}, \label{eq:case201}\\
\eta^{xt}&=&D_{xt}(\eta-\xi u_x-\tau u_t)+\xi u_{xxt}+\tau u_{ttx}, \label{eq:case202}\\
\eta^{tt}&=&D_{tt}(\eta-\xi u_x-\tau u_t)+\xi u_{ttx}+\tau u_{ttt}.\label{eq:case203}
\end{eqnarray}
\begin{proposition}\label{symmetries-of-FK}
Point symmetries of the FPE are generated by the operators
\begin{eqnarray}
V_1&=&e^{a_2t}\frac{\partial}{\partial x}, \qquad V_2=u\frac{\partial}{\partial u}, \qquad  V_4=\frac{\partial}{\partial t},\cr 
V_3&=&\frac{1}{2a_2}e^{-a_2t}\frac{\partial}{\partial x}+(a_2x+a_1)\frac{u}{a_2}e^{-a_2t}
\frac{\partial}{\partial u},\cr 
V_5&=&e^{-2a_2t}\frac{\partial}{\partial t}-(a_2x+a_1)e^{-2a_2t}\frac{\partial}{\partial x}-
2(a_2x+a_1)^2e^{-2a_2t}u\frac{\partial}{\partial u},\cr 
V_6&=&e^{2a_2t}\frac{\partial}{\partial t}+(a_2x+a_1)e^{2a_2t}\frac{\partial}{\partial x}
-a_2ue^{2a_2t}\frac{\partial}{\partial u},\nonumber
\end{eqnarray}
and an infinite number of generators 
 $V_\alpha=\alpha(x,t)\frac{\partial}{\partial u}$; 
where $\alpha$ is any solution  of the FPE.
\end{proposition}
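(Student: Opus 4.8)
The plan is to apply the infinitesimal symmetry criterion of Theorem \ref{ict} to the single defining function $\Delta$ and to solve the resulting determining system for the coefficients $\xi,\tau,\eta$ of $V$. Here $n=2$, $p=2$, $q=1$, so the relevant prolonged field is $V^{(2)}$ of (\ref{e03}). Since $\Delta$ involves only $u$, $u_x$, $u_t$ and $u_{xx}$, the condition $V^{(2)}(\Delta)=0$ on $\Delta=0$ reads explicitly
\begin{equation*}
\eta^{t} + a_2\,\eta + a_2\,\xi\, u_x + (a_2 x + a_1)\,\eta^{x} - \tfrac{1}{2}\,\eta^{xx} = 0,
\end{equation*}
imposed modulo the constraint $u_t = -a_2 u - (a_2 x + a_1) u_x + \tfrac12 u_{xx}$. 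In particular only $\eta^{x}$, $\eta^{t}$ and $\eta^{xx}$ occur, so (\ref{eq:case202}) and (\ref{eq:case203}) are not needed.

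First I would expand $\eta^{x}$, $\eta^{t}$, $\eta^{xx}$ by carrying out the total derivatives in (\ref{eq:case199})--(\ref{eq:case201}); after the usual cancellations of the $u_{xt}$, $u_{tt}$, $u_{xxx}$, $u_{xxt}$ terms, each becomes a polynomial in the jet coordinates with coefficients assembled from $\xi,\tau,\eta$ and their partial derivatives in $(x,t,u)$. I would then restrict to the solution variety $S_\Delta$ by substituting $u_t = -a_2 u - (a_2 x + a_1)u_x + \tfrac12 u_{xx}$ (and, where they arise, its differential consequences) so that the symmetry condition becomes a polynomial identity in the independent coordinates $x,t,u$ and the $x$-derivatives $u_x, u_{xx}, \dots$.

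Next, because $\xi,\tau,\eta$ depend on $(x,t,u)$ alone, I would collect the coefficients of the distinct monomials in $u_x$ and $u_{xx}$ (in particular $u_x^2$, $u_x^3$, $u_x u_{xx}$, and the $u_{xx}$, $u_x$ and $u$-free terms) and set each to zero, producing the overdetermined determining system. The consequences I expect are the standard ones for a linear parabolic equation: $\tau_x=\tau_u=0$, so $\tau=\tau(t)$; $\xi_u=0$, so $\xi=\xi(x,t)$; and $\eta$ affine in $u$, say $\eta=\beta(x,t)\,u+\alpha(x,t)$. The part $\alpha(x,t)\partial_u$ decouples and its determining equation turns out to be exactly the FPE (\ref{e01}), which accounts for the infinite family $V_\alpha$ with $\alpha$ any solution of the FPE; this piece reflects the linearity of the equation.

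The main obstacle is integrating the remaining finite linear system for $\tau(t)$, $\xi(x,t)$ and $\beta(x,t)$. I expect these equations to couple $\tau$ with $\xi$ and $\beta$ and to force the $t$-dependence to be built from the exponentials $e^{\pm a_2 t}$ and $e^{\pm 2 a_2 t}$ that appear in the generators, with the constant solution $\tau\equiv 1$ giving $V_4=\partial_t$ and the pure scaling $\beta\equiv 1$, $\xi=\alpha=0$ giving $V_2=u\,\partial_u$. Reading off a basis of the resulting six-dimensional solution space then yields $V_1,\dots,V_6$. Finally I would confirm the proposition by verifying directly that each listed generator satisfies the displayed symmetry condition and that the $V_i$ are linearly independent, so that together with the family $V_\alpha$ they generate the full point-symmetry algebra of the FPE.
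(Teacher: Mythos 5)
Your proposal is correct and follows essentially the same route as the paper's own proof: apply the second-prolongation criterion to $\Delta$ modulo the FPE, collect coefficients of the derivative monomials to get the determining system (the paper's equations (\ref{eq:case213})--(\ref{eq:case207})), deduce $\tau=\tau(t)$, $\xi$ linear in $x$, and $\eta=A(x,t)u+B(x,t)$ with $B$ an arbitrary solution of the FPE accounting for $V_\alpha$, then integrate the remaining linear system whose solutions involve $e^{\pm a_2t}$, $e^{\pm 2a_2t}$ and read off the six-dimensional basis $V_1,\dots,V_6$. The only difference is that you leave the final integration as an expectation rather than carrying it out, whereas the paper exhibits the explicit constants $C_1,\dots,C_6$ solution; the structure and substance are otherwise identical.
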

\begin{proof}
We make the assumption $V^{(2)}\Delta(x,t;u^{(2)})=0$ whenever $\Delta(x,t;u^{(2)})=0$, 
and check the corresponding conditions on $\xi$, $\tau$ and $\eta$. Those conditions lead to
\begin{eqnarray}\label{e05}
\left(\eta^t=-a_2\eta-a_1\eta^x-a_2\xi u_x-a_2 x\eta^x+\frac{1}{2}\eta^{xx}\right)_{\mid\Delta=0}.
\end{eqnarray}
Now replace $\eta^x$, $\eta^t$ and $\eta^{xx}$ in (\ref{e05}) by their expressions given in 
(\ref{eq:case199}), (\ref{eq:case200}) and (\ref{eq:case201}) respectively, and  
eliminate $u_t$ by substituting it by the right hand side of (\ref{e01}) any time when it occurs. 
Then the derivatives of $u$ with right to $t$ disappear.
So, the resolution of the corresponding system of PDEs is equivalent to solving the following system:
\begin{eqnarray}
\eta_{uu}=0, &\label{eq:case213}\\
2\xi_x-\tau_t=0,  \qquad \tau_x=0, \qquad \tau_u=0,\qquad \xi_u=0, \label{eq:case211}\\
2(a_2x+a_1)\xi_x-2\xi_t+2a_2\xi-2\eta_{xu}=0, &\label{eq:case209}  \\
\eta_{xx}-2(a_2x+a_1)\eta_x-2\eta_t+2a_2u\eta_u-2a_2\eta-2a_2u\tau_t=0, &\label{eq:case207}
\end{eqnarray}
Equation (\ref{eq:case213}) implies that $\eta$ is linear in $u$. So, it writes
\begin{eqnarray}
 \eta(x,t,u)=A(x,t)u+B(x,t),
\end{eqnarray}
$A$ and $B$ being smooth functions depending only on $x$ and $t$. From (\ref{eq:case211}), 
we get 
\begin{eqnarray}
 \xi=\frac{1}{2}\tau_tx+k(t),
\end{eqnarray}
where $k$  is a smooth function of $t$. Substituting $\xi$ and $\eta$ by 
their expressions in (\ref{eq:case209}) and 
differentiating the resulting expression with respect to $x$, we get  
$2A_{xx}-2a_2\tau_t+\tau_{tt}=0$. Thus,
\begin{eqnarray}
& & A(x,t)=\left(\frac{1}{2}a_2\tau_t-\frac{1}{4}\tau_{tt}\right)x^2+A_1(t)x+A_2(t),\label{eq:case215}\\ 
& & k'(t)-a_2k(t)+A_1(t)-\frac{1}{2}a_1\tau_t=0, \label{eq:case216}
\end{eqnarray}
where $A_1$ and $A_2$ are smooth functions of the variable $t$. Using Equation (\ref{eq:case207}), we find that
\begin{eqnarray}
A_{xx}-2a_2xA_x-2a_1A_x-2A_t-2a_2\tau_t&=&0, \label{eq:case217}\\
-\frac{1}{2}B_{xx}+a_2xB_x+a_1B_x+a_2B+B_t&=&0. \label{eq:case218}
\end{eqnarray}
Note that (\ref{eq:case218}) is nothing but the FPE (\ref{e01}). Now (\ref{eq:case215}), (\ref{eq:case216}) and (\ref{eq:case217}) entail
\begin{eqnarray}
\tau(t)&=&C_1e^{2a_2t}+C_2e^{-2a_2t}+C_3, \cr
A_1(t)&=&[-4a_1a_2C_2e^{-a_2t}+C_4]e^{-a_2t}, \cr
A_2(t)&=&-2a_1^2C_2e^{-2a_2t}+\frac{1}{a_2}C_4a_1e^{-a_2t}-C_1a_2e^{2a_2t}+C_5, \cr 
k(t)&=&a_1C_1e^{a_2t}-a_1C_2e^{-2a_2t}+\frac{C_4}{2a_2}e^{-a_2t}+C_6e^{a_2t},\nonumber
\end{eqnarray}
where $C_1, C_2,\dots,C_6$ are real numbers.
Hence, the solution of the system (\ref{eq:case207})-(\ref{eq:case213}) is
\begin{eqnarray}
\xi(x,t,u)&=&[C_1a_2e^{2a_2t}-C_2a_2e^{-2a_2t}]x+a_1C_1e^{2a_2t}-a_1C_2e^{-2a_2t}+\frac{C_4}{2a_2}e^{-a_2t}+C_6e^{a_2t}, \cr
\tau(x,t,u)&=&C_1e^{2a_2t}+C_2e^{-2a_2t}+C_3, \cr
\eta(x,t,u)&=&\left(-2C_2(a_2x+a_1)^2e^{-2a_2t}+\frac{C_4}{a_2}(a_2x+a_1)e^{-a_2t}\right)u  
            -C_1a_2e^{2a_2t}u+C_5u+\alpha(x,t), \nonumber
\end{eqnarray}
where $\alpha(x,t)=B(x,t)$ is any solution of the FPE. The rest of the proof is straightforward.
\end{proof}

\subsection{Examples of solutions of the FPE}

In the sequel, we provide a family of solutions of the Fokker-Planck equation (\ref{e01}).
\begin{theorem}{\label{solutions-of-FP}}
Let $\alpha(x,t)$ be any solution of the FPE. Then the  functions  
\begin{eqnarray}
f_1(x,t)&=&e^{-2a_2t}\Big[\alpha_t-(a_2x+a_1)\alpha_x+2(a_2x+a_1)^2\alpha\Big], \label{eq:case219}\\
f_2(x,t)&=&\frac{e^{-a_2t}}{a_2}\left(\frac{1}{2}\alpha_x-(a_2x+a_1)\alpha\right),  \label{eq:case220} \\
f_3(x,t)&=&e^{2a_2t}\Big[\alpha_t+(a_2x+a_1)\alpha_x+a_2\alpha\Big], \label{eq:case221} \\
f_4(x,t)&=&\alpha_xe^{a_2t}, \qquad f_5(x,t)=\alpha_t\label{eq:case222} 
\end{eqnarray}
are also solutions  of the FPE.
\end{theorem}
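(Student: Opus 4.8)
The plan is to recognize the five functions $f_1,\dots,f_5$ as the \emph{characteristics} of the symmetry generators found in Proposition~\ref{symmetries-of-FK}, evaluated on the given solution $\alpha$. For a point field $V=\xi\partial_x+\tau\partial_t+\eta\partial_u$ the characteristic is $Q=\eta-\xi u_x-\tau u_t$ (the quantity $Q^\alpha$ introduced just before Theorem~\ref{ict}). Substituting $u=\alpha(x,t)$ into the data $(\xi_i,\tau_i,\eta_i)$ of each generator and forming $Q_i[\alpha]=\eta_i(x,t,\alpha)-\xi_i\alpha_x-\tau_i\alpha_t$, I would check directly the matching
\begin{equation}
f_1=-Q_5[\alpha],\quad f_2=-Q_3[\alpha],\quad f_3=-Q_6[\alpha],\quad f_4=-Q_1[\alpha],\quad f_5=-Q_4[\alpha].\nonumber
\end{equation}
This is a routine algebraic verification against the formulas for $\xi_i,\tau_i,\eta_i$. (The remaining generator $V_2=u\partial_u$ gives $Q_2[\alpha]=\alpha$ and merely reproduces the given solution, which is why six generators yield only five new functions.)

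It then remains to explain why a characteristic evaluated on a solution is again a solution, and here I would use the linearity of the FPE. Write $\Delta(u)=u_t+a_2u+(a_2x+a_1)u_x-\tfrac12 u_{xx}$, a linear operator. For the evolutionary field $v_{Q_i}=Q_i\partial_u$ one has $\operatorname{pr}v_{Q_i}(\Delta)=\Delta_*(Q_i)$, the Fréchet derivative (linearisation) of $\Delta$ applied to $Q_i$; and because $\Delta$ is linear in $u$ and its derivatives, $\Delta_*$ coincides with $\Delta$ itself. Since each $V_i$ is a symmetry, its evolutionary representative $v_{Q_i}$ is one as well, so the infinitesimal criterion (Theorem~\ref{ict}, in evolutionary form) forces $\Delta_*(Q_i)=0$ on solutions, i.e.\ $\Delta\big(Q_i[\alpha]\big)=0$. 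Equivalently, at the group level: $\exp(\epsilon V_i)$ carries $\alpha$ to a one-parameter family $\alpha_\epsilon$ of solutions with $\tfrac{d}{d\epsilon}\big|_{0}\alpha_\epsilon=Q_i[\alpha]$, and since the solution set of a linear equation is a vector space, this tangent direction is itself a solution.

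The argument requires each $\eta_i$ to be affine in $u$, so that $Q_i[\alpha]$ depends on $(x,t)$ alone; inspection of Proposition~\ref{symmetries-of-FK} confirms this for every generator. The only genuine obstacle is the step in the second paragraph—the passage from ``symmetry of the equation'' to ``map on the solution set''—which rests squarely on linearity; everything else is bookkeeping. As a self-contained alternative I could bypass the symmetry machinery and verify each $f_i$ by hand: substitute it into~(\ref{e01}) and reduce using that $\alpha$ solves~(\ref{e01}) together with the equations obtained by differentiating~(\ref{e01}) in $x$ and in $t$. For instance $\alpha_t$ satisfies the FPE verbatim, whereas $\alpha_x$ does so only after the correcting factor $e^{a_2t}$, which accounts for the exponentials appearing in~(\ref{eq:case219})--(\ref{eq:case222}). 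I expect the symmetry route to give the cleaner write-up.
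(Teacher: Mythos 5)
Your proposal is correct, but it justifies the theorem by a genuinely different mechanism than the paper. The paper's proof is purely Lie-algebraic: since the symmetry generators form a Lie algebra, each bracket $[V_i,V_\alpha]$ must again be a symmetry generator; the commutator table shows $[V_1,V_\alpha]=V_{f_4}$, $[V_3,V_\alpha]=V_{f_2}$, $[V_4,V_\alpha]=V_{f_5}$, $[V_5,V_\alpha]=V_{f_1}$, $[V_6,V_\alpha]=V_{f_3}$, and by the classification in Proposition \ref{symmetries-of-FK} a field of the form $\beta(x,t)\frac{\partial}{\partial u}$ is a symmetry precisely when $\beta$ solves the FPE, so each $f_i$ is a solution. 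Your route instead identifies $f_i=-Q_i[\alpha]$ as characteristics and invokes the evolutionary-representative/linearization argument: $\operatorname{pr}v_{Q}(\Delta)=\Delta_*(Q)$, and $\Delta_*=\Delta$ by linearity, so the symmetry criterion forces $\Delta\bigl(Q_i[\alpha]\bigr)=0$. The two computations are secretly the same algebra --- for generators whose $\xi,\tau$ are $u$-independent and whose $\eta$ is linear in $u$, one has exactly $[V_i,V_\alpha]=V_{-Q_i[\alpha]}$, which is why your matching reproduces the paper's table entry by entry --- but the justifications differ in what they presuppose. Your argument needs only that each individual $V_i$ is a symmetry plus linearity of the operator; it does not use closure of the algebra, the bracket table, or the fact that the classification of Proposition \ref{symmetries-of-FK} is exhaustive, and it generalizes verbatim to any linear homogeneous PDE. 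The paper's argument stays entirely inside the point-symmetry formalism (no Fr\'echet derivatives or evolutionary fields), leans on the correspondence $V_\beta \leftrightarrow$ solutions $\beta$ already established in Proposition \ref{symmetries-of-FK}, and produces the full commutator table as a by-product, which the authors reuse afterwards to generate chains of solutions such as $g_1,\dots,g_4$. Either write-up is complete; your fallback of direct substitution (using the $x$- and $t$-differentiated FPE) is also sound, as your observation about the correcting factor $e^{a_2t}$ for $\alpha_x$ confirms.
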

\begin{proof}
Since $\{V_\alpha, V_i,\, i=1,\dots,6\}$  generates a Lie algebra, 
the  stability of the brackets in the table below completes the proof.
\begin{table}[h!]
\begin{center}
\begin{tabular}{|l|l|l|}
\hline
$[V_1,V_2]=0$          & $[V_2,V_4]=0$      & $[V_3,V_\alpha]=V_{\frac{e^{-a_2t}}{a_2}
                                           \left[\frac{1}{2}\alpha_x-(a_2x+a_1)\alpha\right]}$    \cr
$[V_1,V_3]=V_2$        & $[V_2,V_5]=0$      & $[V_4,V_5]=-2a_2V_5 $                               \cr                      
$[V_1,V_4]=-a_2V_1$    & $[V_2,V_6]=0$      & $[V_4,V_6]=2a_2V_6 $                                \cr
$[V_1,V_5]=-4a_2^2V_3$ & $[V_2,V_\alpha]=-V_\alpha$ & $[V_4,V_\alpha]=V_{\alpha_t}$               \cr
$[V_1,V_6]=0$          & $[V_3,V_4]=a_2V_3$ & $[V_5,V_6]=4a_2V_4-2a_2^2V_2$                       \cr
$[V_1,V_\alpha]=V_{\alpha_xe^{a_2t}}$ & $[V_3,V_5]=0 $ & $[V_5,V_\alpha]=V_{e^{-2a_2t}[\alpha_t-
                                                           (a_2x+a_1)\alpha_x+2(a_2x+a_1)^2\alpha]}$ \cr
$[V_2,V_3]=0 $        & $[V_3,V_6]=V_1$ & $[V_6,V_\alpha]=V_{e^{2a_2t}[\alpha_t+(a_2x+a_1)\alpha_x
                                          +a_2\alpha]}$ \cr
\hline
\end{tabular}
\caption{\label{tab:brackets} Commutations table of the Lie algebra of symmetries of the FPE}
\end{center}
\end{table}
%

\end{proof}
As mentioned in \cite{ouhadan-kinani-rahmoune-awane}, using the Lie brackets in Table \ref{tab:brackets}, one 
can construct a family of solutions from a trivial solution.
Consider  e.g. $u(x,t)=e^{-a_2t}$, then the functions
\begin{eqnarray}
g_1(x,t)&=&[-a_2+2(a_2x+a_1)^2]e^{-3a_2t}, \\
g_2(x,t)&=&-\left(x+\frac{a_1}{a_2}\right)e^{-2a_2t},\\
g_3(x,t)&=&-a_2e^{-a_2t}
\end{eqnarray}
are also solutions of (\ref{e01}). From these solutions we can again construct other solutions. 
For instance, applying the symmetry generators (\ref{eq:case220}) to $g_1$ yields  to the solution
\begin{eqnarray}
 g_4(x,t)=\left[3a_2^2-12a_2(a_2x+a_1)^2+4(a_2x+a_1)^4\right]\exp(-5a_2t).
\end{eqnarray}

\section{Potential symmetries of the FPE \label{sec:potential-symmetries}}
\subsection{Preliminaries on potential symmetries}
A partial differential equation of order $n$ in the unknown function $u(x,t)$
\begin{eqnarray}{\label{pde}}
  \Delta(x,t,u^{(n)})=0,
\end{eqnarray}
is written in a conserved form if it has the following form:
\begin{eqnarray}{\label{cf}}
  D_tT(x,t,u^{(n-1)})+D_xX(x,t,u^{(n-1)})=0.
\end{eqnarray}
Since the PDE (\ref{cf}) is in a conserved form, a potential $v$ considered as a new variable is introduced. 
A system of PDEs denoted by $\mathcal{S}(x,t,u^{(n-1)},v_x,v_t)$ is then obtained. If $(u(x,t),v(x,t))$ is a solution of 
the system  $\mathcal{S}(x,t,u^{(n-1)},v_x,v_t)$,
then $u(x,t)$ solves the PDE  given by (\ref{pde}).
\begin{definition}
 Assume that the auxiliary system $\mathcal{S}(x,t,u^{(n-1)},v_x,v_t)$ admits a generator  $W$ of point symmetries given by
$
W=\xi(x,t,u,v)\frac{\partial}{\partial x}+\tau(x,t,u,v)\frac{\partial}{\partial t}+\eta(x,t,u,v)\frac{\partial}{\partial u}+
\phi(x,t,u,v)\frac{\partial}{\partial v}.
$ 
One says that  $\mathcal{S}(x,t,u^{(n-1)},v_x,v_t)$ defines a potential symmetry admitted by (\ref{pde}) if and only if one, at least,
of the infinitesimals $\xi$, $\tau$ and $\eta$ depends explicitly on the potential $v$;  that is if and only if the condition 
\begin{eqnarray}{\label{eq:potential-condition}}
\left(\frac{\partial \xi}{\partial v}\right)^2+\left(\frac{\partial \tau}{\partial v}\right)^2+
\left(\frac{\partial \eta}{\partial v}\right)^2\neq 0 
\end{eqnarray}
holds. In this case, the symmetry $Y=\xi(x,t,u,v)\frac{\partial}{\partial x}+\tau(x,t,u,v)\frac{\partial}{\partial t}
+\eta(x,t,u,v)\frac{\partial}{\partial u}$ will be called a potential symmetry of Equation (\ref{pde}).
\end{definition}
Potential symmetries can also be used in the study of a boundary value problem posed for a given system of PDEs and for 
the study of ODEs. For a scalar ODE, a potential symmetry reduces the order (see \cite{bluman-kumei:symmetries-de}).

We are now going to explain how, from potential symmetries, one obtains solutions of the PDE (\ref{pde}) which admits 
a conserved form (\ref{cf}). See \cite{pucci-saccomandi} for wider discussion. 
Given a point symmetry $\xi\frac{\partial}{\partial x}+\tau\frac{\partial}{\partial t}
+\eta\frac{\partial}{\partial u}+\phi\frac{\partial}{\partial v}$ of (\ref{cf}), the invariant surface conditions are
\begin{eqnarray}
 \xi(x,t,u,v)u_x+\tau(x,t,u,v)u_t-\eta(x,t,u,v)=0,  \label{eq:surface-condition1}\\
 \xi(x,t,u,v)v_x+\tau(x,t,u,v)v_t-\phi(x,t,u,v)=0.  \label{eq:surface-condition2}
\end{eqnarray}
The associated characteristic system yields to the following independent integrals
\begin{equation}
 s_1(x,t,u,v)=c_1, \quad s_2(x,t,u,v)=c_2, \quad s_3(x,t,u,v)=c_3, \label{eq:integrals}
\end{equation}
with $\dfrac{\partial (s_1,s_2,s_3)}{\partial(u,v)}$ of rank $2$. 
If we set $z=c_1$, $c_2=h_1(z)$ and $c_3=h_2(z)$, we obtain from (\ref{eq:integrals}):
\begin{eqnarray}
u=U(x,t,z,h_1(z),h_2(z)), \label{eq:solution1-surface-condition}\\
v=V(x,t,z,h_1(z),h_2(z)), \label{eq:solution2-surface-condition}\\
G(x,t,z,h_1(z),h_2(z))=0.
\end{eqnarray}
The invariant solutions of (\ref{cf}) are given by (\ref{eq:solution1-surface-condition}) and (\ref{eq:solution2-surface-condition}),
where $h_i(z)$ are the solutions of the ordinary system obtained by substitution in (\ref{cf}). Since (\ref{pde}) is a differential
consequence of (\ref{cf}), the solution of (\ref{cf}) give those solutions of (\ref{pde}), which verify the differential relation 
obtained by eliminating $v$ between (\ref{eq:surface-condition1}) and 
$ \xi T+\tau X-\phi=0$.

\subsection{Potential symmetries of the FPE}

The conserved form of the FPE  can be written as
$D_tu+D_x\left(-(a_2x+a_1)u+\frac{1}{2}u_x\right)=0$.
Then, the corresponding system writes as follows:
\begin{equation}\left\{
\begin{array}{ccll}
v_t&=&-(a_2x+a_1)u+\frac{1}{2}u_x, & \cr
v_x&=&u, &  
\end{array}\right. \label{e224}
\end{equation}
where the potential variable $v$ has been introduced as a new dependent variable.
\begin{proposition}\label{prop:symmetries-auxiliary-system}
 The system (\ref{e224}), with $a_1\in \R$ and $a_2\neq 0$,  admits a non trivial 
 symmetry group with the following  infinitesimal generators:
\begin{eqnarray}
W_1&=&e^{a_2t}\frac{\partial}{\partial x}, \qquad W_2=u\frac{\partial}{\partial u}+v\frac{\partial}{\partial v}, \qquad 
W_4=\frac{\partial}{\partial t},\cr \cr 
W_3&=&\frac{1}{2a_2}e^{-a_2t}\frac{\partial}{\partial x}+\left[\left(x+\frac{a_1}{a_2}\right)u+v\right]e^{-a_2t}\frac{\partial}{\partial u}
+\left(\frac{a_1}{a_2}v+xv\right)e^{-a_2t}\frac{\partial}{\partial v}, \cr \cr
W_5&=&-(a_2x+a_1)e^{-2a_2t}\frac{\partial}{\partial x}+e^{-2a_2t}\frac{\partial}{\partial t}
-2\left[\Big((a_2x+a_1)^2-a_2\Big)u+2a_2(a_2x+a_1)v\right]e^{-2a_2t}\frac{\partial}{\partial u} \cr
& &-2\left[(a_2x+a_1)^2-a_2\right]ve^{-2a_2t}\frac{\partial}{\partial v}, \cr\cr
W_6&=&e^{2a_2t}\frac{\partial}{\partial t}+\left(a_2x+a_1\right)e^{2a_2t}\frac{\partial}{\partial x}-
a_2ue^{2a_2t}\frac{\partial}{\partial u}, \nonumber
\end{eqnarray}
and an infinite number of generators of the form
$W_\beta=\beta_x(x,t)\frac{\partial}{\partial u}+\beta(x,t)\frac{\partial}{\partial v}$, 
where $\beta(x,t)$ satisfies the equation
$
 \beta_t=-(a_2x+a_1)\beta_x+\frac{1}{2}\beta_{xx}.
$
\end{proposition}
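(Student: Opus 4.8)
The plan is to apply Lie's algorithm directly to the first-order system (\ref{e224}), in complete parallel with the computation behind Proposition \ref{symmetries-of-FK}. I write the two equations as
\[
\Delta_1:=v_t+(a_2x+a_1)u-\tfrac12 u_x=0,\qquad \Delta_2:=v_x-u=0,
\]
take the general generator $W=\xi\,\partial_x+\tau\,\partial_t+\eta\,\partial_u+\phi\,\partial_v$ with $\xi,\tau,\eta,\phi$ smooth in $(x,t,u,v)$, and form its first prolongation $W^{(1)}=W+\eta^x\partial_{u_x}+\eta^t\partial_{u_t}+\phi^x\partial_{v_x}+\phi^t\partial_{v_t}$ using the characteristic formulas of the type (\ref{eq:case199})--(\ref{eq:case200}). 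The symmetry criterion of Theorem \ref{ict}, namely $W^{(1)}(\Delta_i)=0$ on the solution manifold, then reduces to the two scalar conditions
\[
\phi^x-\eta=0,\qquad \phi^t+a_2\xi u+(a_2x+a_1)\eta-\tfrac12\eta^x=0,
\]
to be imposed where $v_x=u$ and $v_t=-(a_2x+a_1)u+\tfrac12 u_x$.

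First I would eliminate the constrained jet coordinates by substituting these two relations into $\eta^x,\phi^x,\phi^t$, so that the only free derivative coordinates left are $u_x$ and $u_t$. Each of the two conditions then becomes a polynomial identity in $u_x,u_t$ whose coefficients are functions of $(x,t,u,v)$; requiring every such coefficient to vanish identically in $u$ and $v$ produces the overdetermined linear determining system. I expect the leading monomials to be decisive: the coefficient of $u_x^2$ in the first condition forces $\tau_u=0$, the $u_t$-terms of the second condition force $\tau_x=\tau_v=0$ so that $\tau=\tau(t)$, and the remaining separations in $u,v$ force $\xi$ to be independent of $u$ and $v$, $\phi$ to be affine in $v$ alone, and $\eta$ to be affine in the pair $(u,v)$. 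Substituting these reduced forms collapses the surviving equations to a finite set of linear ODEs in $t$ for the coefficient functions, together with the requirement that the $(u,v)$-free part of $\phi$ solve the scalar equation stated in the proposition.

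Integrating the $t$-ODEs is then the bookkeeping core of the proof: they produce the characteristic exponentials $e^{\pm 2a_2 t}$, $e^{\pm a_2 t}$ and the constant solutions, and back-substitution assembles these into the six generators $W_1,\dots,W_6$. The infinite family $W_\beta=\beta_x\partial_u+\beta\partial_v$ needs no separate computation: it is the infinitesimal form of the \emph{linearity} of (\ref{e224}), since adding $(\varepsilon\beta_x,\varepsilon\beta)$ to any solution again solves the system precisely when $\beta_t=-(a_2x+a_1)\beta_x+\tfrac12\beta_{xx}$, equivalently when $\beta_x$ solves the FPE; thus each such $\beta$ yields a generator.

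The main obstacle will be the algebraic bookkeeping created by the coupling of $\Delta_1$ and $\Delta_2$ through the shared infinitesimals $\xi,\tau$ and by the genuine dependence of $\eta$ (and $\phi$) on the potential $v$. In particular one must carry the $v$-dependence all the way through the separation rather than discarding it prematurely, both to recover $W_2,W_3,W_5$ in their stated forms and to confirm the potential condition (\ref{eq:potential-condition}): since $\eta_v\ne0$ for $W_3$ and $W_5$, the potential condition holds and these generators qualify as genuine potential symmetries of the FPE, not merely point symmetries in disguise. A secondary care point is sign consistency in the elimination of $v_x,v_t$, since an error there would shift the exponents produced by the $t$-integration and corrupt the final generators.
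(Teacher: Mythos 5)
Your proposal is correct and follows essentially the same route as the paper's proof: Lie's infinitesimal criterion applied to the auxiliary system (\ref{e224}), elimination of $v_x,v_t$ on the solution manifold, separation of the resulting identities in the remaining derivatives into an overdetermined linear determining system (forcing $\tau=\tau(t)$, $\xi=\xi(x,t)$, $\phi$ affine in $v$, $\eta$ affine in $(u,v)$), and integration of the ensuing ODEs in $t$ to assemble $W_1,\dots,W_6$ and $W_\beta$. The only cosmetic differences are that you use the first prolongation (sufficient, since the system is first order) where the paper writes out a second prolongation whose higher-order coefficients never actually enter, and that you justify the infinite family $W_\beta$ by the linearity of the system where the paper reads it off from the determining equations.
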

\begin{proof}
Let $\Delta_{1}=v_t+(a_2x+a_1)u-\frac{1}{2}u_x$ and $\Delta_{2}=v_{x}-u$ 
 be the associated system to the system  (\ref{e224}) and let
 $W=\xi(x,t,u,v)\frac{\partial}{\partial x}+\tau(x,t,u,v)\frac{\partial}{\partial t}+\eta(x,t,u,v)\frac{\partial}{\partial u}+
 \phi(x,t,u,v)\frac{\partial}{\partial v}$ be a symmetry vector field of this system. 
The  criterion (\ref{ic})  writes
$
 W^{(2)}(\Delta_{i})|_{\Delta_{i}=0,i=1,2}=0,
$ 
where 
\begin{eqnarray}
 W^{(2)}&=&W+\eta^{x}\frac{\partial}{\partial u_{x}}+\eta^{t}\frac{\partial}{\partial u_{t}}+\phi^{x}\frac{\partial}{\partial v_{x}}+
\phi^{t}\frac{\partial}{\partial v_{t}}+\eta^{xx}\frac{\partial}{\partial u_{xx}}+\eta^{xt}\frac{\partial}{\partial u_{xt}} \cr \cr 
&&+\eta^{tt}\frac{\partial}{\partial u_{tt}}+\phi^{xx}\frac{\partial}{\partial v_{xx}}+\phi^{xt}\frac{\partial}{\partial v_{xt}}+
\phi^{tt}\frac{\partial}{\partial v_{tt}}.
\end{eqnarray}
The coefficient functions $(\eta^{x},\eta^{t},\phi^{x},\phi^{t},\eta^{xx},\eta^{xt},\eta^{tt},\phi^{xx},\phi^{xt},\phi^{tt})$
in $W^{(2)}$ are given as follows
\begin{eqnarray}
\eta^{x}&=&D_{x}\eta-u_{x}D_{x}\xi-u_{t}D_{x}\tau, \quad \eta^{t}=D_{t}\eta-u_{x}D_{t}\xi-u_{t}D_{t}\tau,\cr \cr
\phi^{x}&=&D_{x}\phi-v_{x}D_{x}\xi-v_{t}D_{x}\tau, \quad \phi^{t}=D_{t}\phi-v_{x}D_{t}\xi-v_{t}D_{t}\tau,\cr \cr
\eta^{xx}&=&D^{2}_{x}\eta-u_{x}D^{2}_{x}\xi-u_{t}D^{2}_{x}\tau-2u_{xx}D_{x}\xi-2u_{xt}D_{x}\tau,\cr \cr
\phi^{xx}&=&D^{2}_{x}\phi-v_{x}D^{2}_{x}\xi-v_{t}D^{2}_{x}\tau-2v_{xx}D_{x}\xi-2v_{xt}D_{x}\tau,\cr \cr
\eta^{xt}&=&D^{2}_{xt}\eta-u_{x}D^{2}_{xt}\xi-u_{t}D^{2}_{xt}\tau-u_{xx}D_{t}\xi-u_{tx}D_{t}\tau-u_{xt}D_{x}\xi-u_{tt}D_{x}\tau, \cr \cr
\phi^{xt}&=&D^{2}_{xt}\phi-v_{x}D^{2}_{xt}\xi-v_{t}D^{2}_{xt}\tau-v_{xx}D_{t}\xi-v_{tx}D_{t}\tau-v_{xt}D_{x}\xi-v_{tt}D_{x}\tau, \cr \cr
\eta^{tt}&=&D^{2}_{t}\eta-u_{x}D^{2}_{t}\xi-u_{t}D^{2}_{t}\tau-2u_{xt}D_{t}\xi-2u_{tt}D_{t}\tau,\cr \cr
\phi^{tt}&=&D^{2}_{t}\phi-v_{x}D^{2}_{t}\xi-v_{t}D^{2}_{t}\tau-2v_{xt}D_{t}\xi-2v_{tt}D_{t}\tau,\nonumber
\end{eqnarray}
Hence, the criterion $W^{(2)}(\Delta_{i})|_{\Delta_{i}=0,i=1,2}=0$ gives the following equalities:
\begin{eqnarray}
\Big(\phi^{t}+a_2\xi u+(a_2x+a_1)\eta-\frac{1}{2}\eta^x=0\Big)_{|{\Delta_{i}=0,i=1,2}} {\label{e225}}\\ 
\Big(\phi^{x}-\eta=0\Big)_{|{\Delta_{i}=0,i=1,2}}
\end{eqnarray}
 Replacing $\phi^{x},\phi^{t}$ and $\eta^{x}$ by their 
 expressions in (\ref{e225}) and equalizing the coefficients of the remaining unconstrained partial derivatives of $u$ and $v$ to zero, 
one obtains: 
\begin{eqnarray}
\xi_u&=&0, \quad \xi_v=0, \quad 2\xi_x=\tau_t,\label{eq:case226}\\
\tau_x&=&0, \quad \tau_u=0, \qquad \tau_v=0, \label{eq:case228}\\
\phi_u&=&0, \quad \phi_{vv}=0, \label{eq:case229}\\
\tau_{ttt}&=&4a_2^2\tau_t, \quad 2\xi_{tt}=3a_2\big[(a_2x+a_1)\tau_t+\frac{2}{3}a_2\xi\big],\label{eq:case230}\\
2\eta&=&2\phi_x-\tau_tu+2u\phi_v, \label{eq:case232}\\
2\phi_{vx}&=&(a_2x+a_1)\tau_t-2\xi_t+2a_2\xi, \label{eq:case233}\\
\phi_{xx}&=&2(a_2x+a_1)\phi_x+2\phi_t, \label{eq:case234}\\
4\phi_{tv}&=&\!\!-\tau_{tt}+\left(-2a_2^2x^2+(2-4a_1x)a_2-2a_1^2\right)\tau_t+(a_2x+a_1)\xi_t -a_2(a_2x+a_1)\xi. \label{eq:case235}
\end{eqnarray}
Equations (\ref{eq:case228}) imply that $\tau$ depends only  on $t$. 
Hence,  relations in (\ref{eq:case226}) yield to
\begin{equation}\label{e236}
 \xi=\frac{1}{2}\tau_tx+L(t),
\end{equation}
where $L$ is a smooth function of $t$. Relations (\ref{eq:case229}) imply that $\phi$ is independent from $u$ and is linear with 
right to $v$. That is there exists functions $D$ and $E$ depending only on $x$ and $t$ such that
\begin{eqnarray}{\label{e237}}
 \phi=D(x,t)v+E(x,t).
\end{eqnarray}
Then, substituting $\xi$ and $\phi$ by their expressions in 
(\ref{eq:case233}) and differentiating the resulting expression with respect to $x$, one obtains the equation 
$2D_{xx}-2a_2\tau_t+\tau_{tt}=0$. Thus
\begin{eqnarray}
& & D(x,t)=\left(\frac{1}{2}a_2\tau_t-\frac{1}{4}\tau_{tt}\right)x^2+B_1(t)x+B_2(t), \label{eq:case238}\\
& &  L'(t)-a_2L(t)+B_1(t)-\frac{1}{2}a_1\tau_t=0, \label{eq:case239} 
\end{eqnarray}
where $B_1$ and $B_2$ are smooth functions of $t$ only. Coming back  to Equation (\ref{eq:case234}), we find that
\begin{eqnarray}
D_{xx}-2(a_2x-a_1)D_x-2D_t&=&0, \label{eq:case240}\\
E_{xx}-2(a_2x-a_1)E_x-2E_t&=&0. \label{eq:case241}
\end{eqnarray}
Here again, (\ref{eq:case241}) is equivalent to (\ref{e224}).
From Equations  (\ref{eq:case238}) and (\ref{eq:case240}), one gets
\begin{eqnarray}
\tau(t)&=&C_1e^{2a_2t}+C_2e^{-2a_2t}+C_3, \cr
B_1(t)&=&(-4a_1a_2C_2e^{-a_2t}+C_4)e^{-a_2t}, \cr
B_2(t)&=&-2a_1^2C_2e^{-2a_2t}+\frac{1}{a_2}C_4a_1e^{-a_2t}+C_2a_2e^{-2a_2t}+C_5, \nonumber
\end{eqnarray}
where $C_1$, $C_2$, $C_3$, $C_4$ and $C_5$ are arbitrary constants. Now, Equation (\ref{eq:case239}) yields:
\begin{eqnarray}
 L(t)=a_1C_1e^{2a_2t}-a_1C_2e^{-2a_2t}+\frac{C_4}{2a_2}e^{-a_2t}+C_6e^{a_2t},\nonumber
\end{eqnarray}
where $C_6$ is an arbitrary constant.  Hence, expressions (\ref{eq:case232}), (\ref{e236}) and (\ref{e237})  read: 
\begin{eqnarray}
\xi&=&C_1(a_2x+a_1)e^{2a_2t} - C_2(a_2x+a_1)e^{-2a_2t} +C_4\frac{e^{-a_2t}}{2a_2}+C_6e^{a_2t},\\ \cr 
\phi&=&-2C_2\left[(a_2x+a_1)^2-a_2\right]ve^{-2a_2t}+ C_4\left(x+\frac{a_1}{a_2}\right)ve^{-a_2t}
                  +C_5v+\beta(x,t),\cr\cr 
\eta&=&-C_1a_2ue^{2a_2t}-2C_2\left[\Big((a_2x+a_1)^2-a_2\Big)u
                        +2a_2(a_2x+a_1)v\right]e^{-2a_2t}    \cr 
        & & +C_4\left[v+\left(x+\frac{a_1}{a_2}\right)u\right]e^{-a_2t}  +C_5u   +\beta_x(x,t),\nonumber
\end{eqnarray}
where $C_1,\dots,C_6$ are arbitrary constants and $\beta(x,t)=E(x,t)$ is any solution of (\ref{eq:case241}).
It is now a little matter to complete the proof.
\end{proof}
It is readily verified that $W_3$ and $W_5$ in Proposition \ref{prop:symmetries-auxiliary-system} are the only 
generators of the point symmetries of the system (\ref{e224}) that satisfy  condition (\ref{eq:potential-condition}). Hence, we have the
\begin{proposition}{\label{potential-symmetries}}
The potential symmetries of the FPE  are generated by the vector fields
%
\begin{eqnarray}
Y_1&=&\frac{1}{2a_2}e^{-a_2t}\frac{\partial}{\partial x}
+\left[\left(x+\frac{a_1}{a_2}\right)u+v\right]e^{-a_2t}\frac{\partial}{\partial u}, \cr \cr
Y_2&=&-(a_2x+a_1)e^{-2a_2t}\frac{\partial}{\partial x}+e^{-2a_2t}\frac{\partial}{\partial t}
-2\left[\Big((a_2x+a_1)^2-a_2\Big)u+2a_2(a_2x+a_1)v\right]e^{-2a_2t}\frac{\partial}{\partial u}.
\nonumber
\end{eqnarray}
\end{proposition}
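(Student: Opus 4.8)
The plan is to apply the definition of potential symmetry directly to the full list of generators $W_1,\dots,W_6$ together with the infinite family $W_\beta$ produced in Proposition \ref{prop:symmetries-auxiliary-system}, and to retain exactly those whose infinitesimals pass the screening condition (\ref{eq:potential-condition}). First I would recall that, by the definition preceding this proposition, a point symmetry $W=\xi\partial_x+\tau\partial_t+\eta\partial_u+\phi\partial_v$ of the auxiliary system (\ref{e224}) descends to a genuine potential symmetry of the FPE precisely when at least one of the three infinitesimals $\xi$, $\tau$, $\eta$ depends explicitly on the potential $v$, the component $\phi$ along $\partial_v$ being deliberately excluded from the test (\ref{eq:potential-condition}).

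This exclusion of $\phi$ is the one point where care is genuinely needed, and I would flag it explicitly: the scaling generator $W_2=u\partial_u+v\partial_v$ has $\phi=v$, which does involve $v$, yet its $\xi$, $\tau$, $\eta$ are all $v$-free, so $W_2$ is \emph{not} a potential symmetry. Apart from this bookkeeping subtlety, the verification is mechanical. I would simply read off the $v$-dependence of each generator. For $W_1=e^{a_2t}\partial_x$, for $W_4=\partial_t$, for $W_6$, and for every $W_\beta=\beta_x\partial_u+\beta\partial_v$ (with $\beta$ a function of $x,t$ only), the infinitesimals $\xi$, $\tau$, $\eta$ are manifestly independent of $v$, so (\ref{eq:potential-condition}) fails and none of these yields a potential symmetry.

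By contrast, $W_3$ has $\partial\eta/\partial v=e^{-a_2t}\neq 0$, and $W_5$ has $\partial\eta/\partial v=-4a_2(a_2x+a_1)e^{-2a_2t}$, which is not identically zero precisely because $a_2\neq 0$ is assumed in Proposition \ref{prop:symmetries-auxiliary-system}. Hence $W_3$ and $W_5$ are the only surviving generators, confirming the claim stated just before the proposition. Finally, following the prescription in the definition, I would discard the $\partial_v$ component of each survivor and keep only $\xi\partial_x+\tau\partial_t+\eta\partial_u$: applied to $W_3$ this produces $Y_1$, and applied to $W_5$ it produces $Y_2$, exactly as asserted. I expect no real obstacle beyond the $W_2$ caveat noted above, since the remainder reduces to reading off coefficients already computed in Proposition \ref{prop:symmetries-auxiliary-system}.
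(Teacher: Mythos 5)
Your proof is correct and takes essentially the same route as the paper: the paper simply asserts that it is ``readily verified'' that $W_3$ and $W_5$ are the only generators of Proposition \ref{prop:symmetries-auxiliary-system} satisfying condition (\ref{eq:potential-condition}), and then obtains $Y_1$ and $Y_2$ by discarding their $\partial/\partial v$ components. Your case-by-case screening (including the correct observation that $W_2$ fails the test despite $\phi=v$, and that $\partial\eta/\partial v\neq 0$ for $W_5$ because $a_2\neq 0$) merely makes explicit what the paper leaves to the reader.
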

%
Consider  the Symmetries $W_3$ which yields to the potential symmetry $Y_1$. The associated invariant surface conditions are 
\begin{eqnarray}
 u_x-2(a_2x+a_1)u-2a_2v &=&0, \\
 v_x-2(a_2x+a_1)v       &=&0.
\end{eqnarray}
The system below admits the following solutions: 
\begin{equation}
 u(x,t)=[2a_2xq_1(t)+q_2(t)]\exp(a_2x^2+2a_1x), \quad v(x,t)=q_1(t)\exp(a_2x^2+2a_1x), \label{eq:solution-surface-condition-FPE}
\end{equation}
where $q_1$ and $q_2$ are smooth functions of the variable $t$. If we replace the expression of $u(x,t)$ 
given by (\ref{eq:solution-surface-condition-FPE}) in (\ref{e01}), we get 
\begin{equation}
 2a_2[a_2q_1(t)+q_1'(t)]x +[q_2'(t)-q_2(t)]=0.
\end{equation}
Hence, $q_1'(t)+a_2q_1(t)=0$ and $q_2'(t)-q_2(t)=0$ and this yield to $q_1(t)=ae^{-a_2t}$ and $q_2(t)=be^t$. 
Then we have the following solution of the FPE (\ref{e01}): 
\begin{equation}
u(x,t)=[2a_2axe^{-a_2t}+be^t]\exp(a_2x^2+2a_1x), 
\end{equation}
where $a$ and $b$ are constants.

Let us now deal with the symmetry generator $W_5$ which provides the potential symmetry $Y_2$. 
The invariant surface conditions for this symmetry write 
\begin{eqnarray}
 -(a_2x+a_1)u_x+u_t+2[(a_2x+a_1)^2-a_2]u+4a_2(a_2x+a_1)v &=&0, \label{eq:invariant-surface-condition1} \\
 -(a_2x+a_1)v_x+v_t+2[(a_2x+a_1)^2-a_2]v &=&0. \label{eq:invariant-surface-condition2}
\end{eqnarray}
A solution of Equation (\ref{eq:invariant-surface-condition2}) writes
\begin{equation}
v(x, t)=\frac{f\Big((a_2x+a_1)e^{a_2t}\Big)}{(a_2x+a_1)^2}\exp\left(\frac{(a_2x+a_1)^2}{a_2}\right),
\label{eq:solution-v}
\end{equation}
where $f$ is a smooth function. Replacing the expression of $v(x,t)$ given by (\ref{eq:solution-v}) 
in (\ref{eq:invariant-surface-condition1}) and solving the latter, we get 
\begin{equation}
u(x,t)=\frac{4 a_2 xf\Big((a_2x+a_1)e^{a_2t}\Big)+g\Big((a_2x+a_1)e^{a_2t}\Big)}{(a_2x+a_1)^2}\exp\left(\frac{(a_2x+a_1)^2}{a_2}\right),
\label{eq:solution2-invariant-surface-condition}
\end{equation}
where $g$ is another smooth function. 
Now, setting $z=(a_2x+a_1)e^{a_2t}$ and putting expression (\ref{eq:solution2-invariant-surface-condition}) in 
the Fokker-Planck equation (\ref{e01}) yields to an equation that can be regarded as the vanishing of 
a polynomial of degree $3$ in $e^{a_2t}$. Then, the vanishing of the coefficients of this polynomial
leads to the following equations:
%
\begin{eqnarray}
 (1-a_2)zf'(z)+a_2f(z) &=&0, \label{eq:u-condition1}\\
 -8a_2a_1f(z)+2a_2g(z)+4za_1(a_2-1)f'(z) +z(1-a_2)g'(z) &=&0, \label{eq:u-condition2}\\
 2f(z)-2zf'(z)+z^2f''(z) &=&0,\label{eq:u-condition3} \\
 24a_1f(z)-6g(z)-16a_1zf'(z)+4zg'(z)+  4a_1z^2f''(z)-z^2g''(z)&=&0.\label{eq:u-condition4}
\end{eqnarray}
\begin{itemize}
\item If $a_2=1$, the solution of the system is $f(z)=g(z)=0$, for all $z$ and we get the trivial solution 
$u(x,t)=0$ for all $x$ and $t$.
\item Suppose $a_2\neq 1$. Then (\ref{eq:u-condition1}) gives the solution $f(z)=cz^{\frac{a_2}{a_2-1}}$, 
where $c$ is an arbitrary constant. Hence, (\ref{eq:u-condition2}) reduces to $(a-2)c=0$. 
\begin{itemize}
\item If $a_2=2$, then  the solution of the system (\ref{eq:u-condition1})-(\ref{eq:u-condition4})  is 
\begin{equation}
 f(z)=cz^2 \text{ and } g(z)=4a_1cz^2.
\end{equation}
\item If $a_2\neq 2$, then $c=0$ and $f(z)=g(z)=0$, for any $z$.
\end{itemize}
\end{itemize}
It is now clear that the potential symmetry $W_5$ yields to the solution 
\begin{equation}
u(x,t)=\lambda(2x+a_1)\exp\left(\frac{(2x+a_1)^2}{2}+4t\right),
\label{eq:solution-w5-a_2=2}
\end{equation}
for some real number $\lambda$ if $a_2=2$; and to the trivial solution $u(x,t)=0$, for all $x,t\in\R$ otherwise. 

\vskip 0.5cm
\noindent
{\bf Aknowledgement.}
The authors would like to thank the {\it Deutscher Akademischer Austaush Dienst (DAAD)} 
for its financial support. The second author is supported by the {\bf nlaga project} 
and {\bf PACER II}. Part of this work was done during his visit at IMSP (Port-Novo, Benin) funded by the 
{\it DAAD}. He  expresses his gratitude and thanks to these institutions.


%
 
\label{lastpage-01}
\end{document}